\title{Boundary of the action of Thompson's group F on dyadic numbers}
\newtheorem{theorem}{Theorem}[section]
\newtheorem{proposition}[theorem]{Proposition}
\newtheorem{remark}[theorem]{Remark}
\newcommand{\supp}{\mathop{\mathrm{supp}}}
\newcommand {\NN}{\mathbb{N}}
\newcommand {\PP}{\mathbb{P}}
\newcommand {\DD}{\mathbb{D}}
\newcommand {\ZZ}{\mathbb{Z}}
\begin{document}
\title{Boundary of the action of Thompson's group F on dyadic numbers}
\author{Pavlo Mishchenko\footnote {Ecole Normale Sup\'erieure de Lyon, supported by Labex Milyon}}
\date{\today}
\maketitle
\begin{abstract}
We prove that the Poisson boundary of a simple random walk on the Schreier graph of action $F \curvearrowright \DD$, where $\DD$ is the set of dyadic numbers in $[0, 1]$, is non-trivial. This gives a new proof of the result of Kaimanovich: Thompson's group $F$ doesn't have Liouville property. In addition, we compute growth function of the Schreier graph of $F \curvearrowright \DD$.
\end{abstract}

\section{Introduction}
Let $G$ be a group equipped with a probability measure $\mu$.
A right random walk on $(G, \mu)$ is defined as a Markov chain $Z$ with the state space $G$ and
transitional probabilities $\PP(Z_{n+1} = g| Z_n = h) = \mu(h^{-1}g)$.
Specifying initial measure $\theta$ (distribution of $Z_0$), we obtain a probability measure $\PP_{\theta}^{\mu}$ on the space of trajectories $(Z_i)_{i \geq 0} \in G^{\ZZ_+}$. Usually one takes $\theta = \delta_e$ - Dirac measure at the group identity.
The Poisson boundary of the pair $(G, \mu)$ can be defined as the space of ergodic components of the time shift on the $(G^{\NN}, \PP_{\delta_e}^{\mu})$ \cite{mazur}. For more equivalent definitions of the boundary one can look at  \cite{kaimversh}. A pair $(G, \mu)$ is said to have \textit{Liouville property} if the corresponding Poisson boundary is trivial, or, equivalently, when the space of bounded $\mu$-harmonic functions on $G$ is 1-dimensional, i.e. consists of constant functions. A group $G$ has \textit{Liouville property} iff for every symmetric, finitely supported $\mu$ the pair $(G, \mu)$ does. For a recent survey and results on Liouville property and Poisson boundaries see \cite{Anna}, \cite{IET} and \cite{mattebon}.

In this note we prove that Richard Thompson's group $F$ doesn't have Liouville property. A survey on Thompson's groups is presented in \cite{cfp}. Here we only mention that question of amenability of $F$ is one of the major open problems now.

\section{Main results}
Consider a simple random walk on a locally finite graph $G = (V, E)$. Fix a starting point $x_0$. This enables trajectory space $V^{\ZZ_+}$ with a probability measure $P$.
The notion of the boundary is easily adapted to this case: it is the space of ergodic components of the time shift on the $(V^{\ZZ_+}, P)$.
We'll use electrical networks formalism as it appears in \cite{woess}.
Throughout the paper, $d(\cdot, \cdot)$ will denote
standard graph distance.
Let $B(x, n) = \{y \in V: d(x,y) \leq n  \}$ - ball centered at $x$ of radius n. Define also $ \partial B(x, n) = \{y \in V: d(x,y) = n  \}$.
\begin{theorem}\label{exp_geod}
 Suppose a locally-finite graph $G$ is given. Fix any vertex $x_0$. Let $\Upsilon(x_0)$ be the set of geodesics starting at $x_0$. Define $\texttt{gd}(x, n) = \#\{ \gamma=[x_0,...,x_m] \in \Upsilon(x_0):\, x \in \gamma\, \text{  and } d(x, x_m) =n\}$. Suppose there exist some real numbers $c, C > 0$ and $q>1$ such that the following conditions are satisfied:

\begin{subequations}\label{cond}
\begin{align}
\texttt{gd}(n, x) &\leq Cq^n \text{  for every }x \in X\\
cq^n &\leq \texttt{gd}(x_0,n)
\end{align}
\end{subequations}

Then $cap(x_0) > 0$.
\end{theorem}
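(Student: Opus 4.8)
The plan is to show that the simple random walk on $G$ is transient by exhibiting a unit flow from $x_0$ to infinity of finite energy. Since $cap(x_0) = 1/R_{\mathrm{eff}}(x_0 \leftrightarrow \infty)$ and a network carries a finite-energy unit flow to infinity if and only if it is transient (the finite-energy flow criterion, see \cite{woess}), this yields $cap(x_0) > 0$. To build such a flow I will route current along the geodesics themselves. For each $n$, let $N_n = \texttt{gd}(x_0,n)$ be the number of geodesics of length $n$ emanating from $x_0$ (positive by \eqref{cond}), pick one uniformly at random, and set $\theta_n(e)$ equal to the probability that this random geodesic traverses $e$, oriented away from $x_0$. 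Because a geodesic increases the distance to $x_0$ by exactly one at each step, $\theta_n$ is a nonnegative, outward-pointing unit flow from $x_0$ to the sphere $\partial B(x_0,n)$, and edges joining vertices at equal distance from $x_0$ carry no flow at all.

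The heart of the argument is a uniform energy bound for the $\theta_n$. For an edge $e=\{u,v\}$ with $d(x_0,u)=k$ and $d(x_0,v)=k+1$, write $g_n(e)$ for the number of length-$n$ geodesics from $x_0$ through $e$, so that $\theta_n(e)=g_n(e)/N_n$. Every such geodesic passes through $u$ and ends at distance $n-k$ from $u$, hence $g_n(e)\le \texttt{gd}(u,n-k)$. The first inequality of \eqref{cond} gives $g_n(e)\le Cq^{n-k}$ and the second gives $N_n\ge cq^n$, so that
\begin{equation*}
\theta_n(e)\le \frac{Cq^{n-k}}{cq^n}=\frac{C}{c}\,q^{-k}.
\end{equation*}
On the other hand, the edges joining level $k$ to level $k+1$ form a cut separating $x_0$ from $\partial B(x_0,n)$ for $k<n$, and each random geodesic crosses it exactly once, whence $\sum_{e \text{ at level } k}\theta_n(e)=1$. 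Combining the two facts,
\begin{equation*}
\sum_{e \text{ at level } k}\theta_n(e)^2\le \Bigl(\max_{e \text{ at level } k}\theta_n(e)\Bigr)\sum_{e \text{ at level } k}\theta_n(e)\le \frac{C}{c}\,q^{-k},
\end{equation*}
and summing the geometric series over $k=0,\dots,n-1$ gives $\mathcal{E}(\theta_n)=\sum_e\theta_n(e)^2\le \tfrac{C}{c}\cdot\tfrac{q}{q-1}=:M$, independent of $n$.

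It remains to pass to the limit. Since $\mathcal{E}(\theta_n)\le M$ each coordinate $\theta_n(e)$ is bounded, so a diagonal extraction produces a subsequence converging edgewise to a flow $\theta$. Kirchhoff's node law survives the pointwise limit at every vertex other than $x_0$ (each such vertex is interior once $n$ is large), so $\theta$ is a unit flow from $x_0$ to infinity, and by Fatou $\mathcal{E}(\theta)\le \liminf_n \mathcal{E}(\theta_n)\le M<\infty$; hence $cap(x_0)>0$. I expect the main obstacle to be conceptual rather than computational: the decisive move is choosing the \emph{uniform random geodesic} as the flow and recognizing that the comparison of the two bounds in \eqref{cond} forces a per-level decay $q^{-k}$, which together with the unit-cut identity makes the energy telescope into a convergent geometric series. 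The compactness/limit step and the invocation of the transience criterion are standard; the one point requiring care is that equal-level and inward edges carry no flow, so that the cut identity $\sum_{e\text{ at level }k}\theta_n(e)=1$ holds exactly.
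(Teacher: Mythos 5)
Your proof is correct, and it takes the dual route to the one in the paper. The paper works on the potential side: it lower-bounds the Dirichlet energy $D(f)$ of every finitely supported $f$ with $f(x_0)=1$ by keeping only the level-crossing edges, distributing each edge term over the length-$n$ geodesics through it with weight $1/\texttt{gd}(x_{k+1},n-k-1)$, and applying Cauchy--Schwarz along each geodesic, which yields $D(f)\ge c(q-1)/C$ straight from the variational definition of capacity. You work on the flow side: the uniform-random-geodesic flow $\theta_n$ has per-edge value at most $(C/c)q^{-k}$ at level $k$, the unit-cut identity converts this into an energy bound $(C/c)\,q/(q-1)$ uniform in $n$, and a diagonal limit plus Thomson's principle gives $cap(x_0)>0$. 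These are two faces of the same estimate: your bound $g_n(e)/N_n\le (C/c)q^{-k}$ is exactly the reciprocal of the weights appearing in the paper's Cauchy--Schwarz step, and the two constants obtained are essentially reciprocal to each other. What your version buys is an explicit witness (the flow) and cleaner bookkeeping; in particular it sidesteps the step the paper writes as an equality, which is really only an inequality in the needed direction, since not every length-$n$ geodesic through $x_{k+1}$ uses the particular edge $(x_k,x_{k+1})$. What the paper's version buys is economy of machinery: it needs no compactness/limit argument and no appeal to the equivalence between finite-energy unit flows and positive capacity, landing on $cap(x_0)>0$ directly from the definition. Both arguments use the hypotheses in the same way, so neither is more general than the other.
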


\begin{remark}
For example, it's easy to see that conditions(\ref{cond}) are obviously satisfied for a regular $m$-tree, with $q = m$.
\end{remark}

\begin{proof}
We have to show that if $f \in D_0(N)$, $f(x_0) = 1$ then it's Dirichlet norm is bounded from below by some positive constant.
Let $n$ be such that $\supp f \subseteq B(x_0, n-1)$. All resistances are equal to 1 in our case, so we may write
\begin{align*}
D(f) =
\sum\limits_{e \in E}{(f(e^+) - f(e^-))^2} &\geq
\sum_{k=0}^{n-1}\sum_{\substack{x \in \partial B(x_0,k) \\ y \in \partial B(x_0, k+1) \\(x,y) \in E}}{(f(x)-f(y))^2} &=\\
\sum_{\substack{\gamma \in \Upsilon(x_0) \\ [x_0,..,x_n] = \gamma}}\sum_{k=0}^{n-1}{\frac{(f(x_k) - f(x_{k+1}))^2}{\texttt{gd}(x_{k+1}, n-k-1)}} &\geq
\sum_{\substack{\gamma \in \Upsilon(x_0) \\ [x_0,..,x_n] = \gamma}}\frac{(\sum_{k=0}^{n-1}{f(x_k) - f(x_{k+1})})^2}{\sum_{k=0}^{n-1}{\texttt{gd}(x_{k+1}, n-k-1)}} &\geq\\ \texttt{gd}(x_0,n)\frac1{\sum_{k=0}^{n-1}{Cq^{n-k-1}}} &\geq
cq^n\frac1{\sum_{k=0}^{n-1}{Cq^{n-k-1}}} \geq \frac{c(q-1)}{C}
\end{align*}
For the first inequality, we cancel edges which connect vertices which connect vertices at the same distance from $x_0$.
For the second equality, we consider geodesics from $x_0$ to points at the distance $n$ and sum quantities $\frac{(f(x_k) - f(x_{k+1}))^2}{\texttt{gd}(x_{k+1}, n-k-1)}$ over them, getting $(f(x_k) - f(x_{k+1}))^2$ by definition of $\texttt{gd}$.
We use next Cauchy-Schwartz and finiteness of support of $f$: $f\equiv 0 $ outside of $B(x_0, n-1)$.
Thus we have $$D(f) \geq \frac{c(q-1)}{C} $$ for every finitely-supported f,
hence $cap(x_0) > 0$
\end{proof}

This implies, by theorem (2.12) from \cite{woess}, that simple random walk on $G$ is transient.
Now we are going to establish a theorem which connects transience of certain random walks to non-triviality
of boundary. Following \cite{kaimversh}, we call subset $A \subset G$ a trap, if $\lim_n\mathbbm{1}(Z_n \in A)$ exists for almost all trajectories $Z \in G^{\NN}$. We call a graph transient if the simple random walk on it is transient.

\begin{theorem}\label{transtriv}
Let $T$ be a tree with a root vertex $v$ such that for each descendant $v_1, \dots, v_n$ of $v$ ($n \geq 2$)
a subtree $T_i$ rooted at $v_i$ is transient. Then the boundary of simple random walk on $T$ is nontrivial.
\end{theorem}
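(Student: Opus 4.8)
The plan is to establish non-triviality of the boundary by exhibiting at least two disjoint traps, each with positive probability, so that the tail $\sigma$-algebra is non-trivial. The key observation is that the tree $T$ has a root $v$ with at least two children $v_1, \dots, v_n$ ($n \ge 2$), and each subtree $T_i$ rooted at $v_i$ is transient. Transience of $T_i$ means the simple random walk started inside $T_i$ has positive probability of never returning to $v$; I would use this to show that with positive probability the trajectory eventually enters one particular subtree $T_i$ and stays there forever. The event "$Z_n \in T_i$ for all large $n$" is precisely a trap in the sense defined above, and two such events for $i \ne j$ are disjoint.

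Let me sketch the main steps. First, for each $i$ define $A_i = V(T_i)$, the vertex set of the subtree rooted at $v_i$, and consider the event $E_i = \{Z_n \in A_i \text{ for all sufficiently large } n\}$. I would argue that $\lim_n \mathbbm{1}(Z_n \in A_i)$ exists almost surely, so each $A_i$ is a trap. The crux is a quantitative estimate: starting from $v_i$, transience of $T_i$ gives that with probability bounded below by some $p_i > 0$ the walk escapes to infinity within $T_i$ without ever hitting $v$. Since the walk visits $v$ only finitely often almost surely (the whole tree $T$ is transient, being built from transient pieces glued at a single vertex $v$ of finite degree), after the last visit to $v$ the walk must commit to exactly one subtree $T_i$ and remain there. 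Hence $\sum_i \mathbbm{1}(E_i) = 1$ almost surely, and by symmetry or by the escape estimate each $\PP(E_i) > 0$.

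Second, I would verify that $T$ itself is transient, which justifies "finitely many visits to $v$." Each child subtree is transient, so the effective resistance from $v_i$ to infinity in $T_i$ is finite; the resistance from $v$ to infinity in $T$ is the parallel combination over $i$ of these finite resistances plus the unit edge $(v, v_i)$, which remains finite. By Theorem (2.12) of \cite{woess} this gives transience of $T$, and transience of a network is equivalent to the walk escaping every finite set, in particular visiting $v$ only finitely often.

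The main obstacle will be the bookkeeping to show $\PP(E_i) > 0$ for at least two indices, rather than the walk's mass concentrating in a single subtree. I would handle this by conditioning on the first step from $v$ out of its last visit: given that the walk is at $v$ and will never return, it steps to each neighbor $v_i$ with the conditional probability that the $T_i$-walk escapes, and since $n \ge 2$ there are at least two such directions each carrying positive conditional probability. Thus the events $E_1, \dots, E_n$ partition the probability space into at least two non-null pieces, the tail $\sigma$-algebra distinguishes them, and the boundary is non-trivial.
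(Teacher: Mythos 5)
Your proposal is correct and follows essentially the same route as the paper: use transience of the subtrees $T_i$ to show the walk visits $v$ only finitely often, conclude that each $T_i$ is a trap, and then use the positive escape probability from $v_i$ within $T_i$ to show each trap is hit with positive probability, yielding at least two disjoint non-null tail events. Your explicit verification that $T$ itself is transient (via parallel combination of finite effective resistances) is a small point the paper leaves implicit, but the argument is otherwise the same.
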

\begin{proof}
Take any $T_i$.
Almost surely, every trajectory hits $v$ only finitely many times. The only way to move from $T_i$ to $T_j$ is to pass by $v$. Therefore, for any $i$, we'll stay inside or outside of $T_i$ from some moment. This means that $T_i$ is a trap. Let' s prove that it is nontrivial, i.e. random walk will stay at $T_i$ with positive probability. If this is true for each $i$, then every $T_i$, $1 \leq i \leq n$, is a nontrivial trap, so boundary is indeed nontrivial. Consider the following set of trajectories of the simple random walk on $T$:
$$A = \{Z: Z_1 = v_i, \forall k \geq 2\,\, Z_k \neq v_i\}.$$
In addition, consider the set of trajectories of the simple random walk on $T_i$: $$A' = \{Z': Z'_0 = v_i, \forall i \geq 1\,\, Z'_i \neq v_i\}.$$Collecting the following facts:\\
-simple random walk on $T$ goes to $v_i$ with probability $1/n$;\\
-probability of going from $v_i$ not to $v$ is $\frac{\deg(v_i) -1}{\deg(v_i)}$;\\
-$(Z_{k+1})_{k\geq0} \in A'$, and transition probabilities are the same for $Z_{i+1}$ and $Z'_i$ for $i \geq 1$\\
we obtain
$$\PP(A) = \frac1n\frac{\deg(v_i) -1}{\deg(v_i)}\PP(A').$$
This shows us that indeed $\PP(A) > 0$, as $\PP(A') > 0$ due to transience.
\end{proof}

\begin{proposition}\label{nonliouv}
Let $H$ be a graph which is formed by adding a set of graphs $G_v$ with pairwise disjoint sets of vertices to each vertex $v$ in $T$. Then the boundary of simple random walk on $H$ is nontrivial.
\end{proposition}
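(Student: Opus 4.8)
The plan is to rerun the trap argument of Theorem~\ref{transtriv} on $H$; the single new ingredient is that decorating $T$ by the graphs $G_w$ preserves transience. Keep the notation of Theorem~\ref{transtriv}: let $v$ be the root of $T$, let $v_1,\dots,v_n$ ($n\ge2$) be its children, and let $T_i$ be the transient subtree rooted at $v_i$; write $G_w$ for the graph (or union of graphs) glued to a generic vertex $w$ of $T$. For each $i$ set $\tilde T_i=T_i\cup\bigcup_{w\in T_i}G_w$, the subtree $T_i$ together with every decoration hung on one of its vertices. Since the $G_w$ have pairwise disjoint vertex sets and each meets $T$ only at its base vertex $w$, the unique edge of $H$ leaving $\tilde T_i$ is the tree edge $v_iv$; consequently a trajectory can move from $\tilde T_i$ to $\tilde T_j$ only by visiting $v$.

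First I would show that each $\tilde T_i$ is transient, which is the heart of the matter. I would invoke the finite-energy-flow criterion (equivalent to the capacity criterion of \cite{woess} used to prove Theorem~\ref{exp_geod}): a locally finite network is transient iff it carries a unit flow to infinity of finite energy. As $T_i$ is transient it admits such a flow $\theta$ out of $v_i$; extend it to $\tilde T_i$ by setting $\tilde\theta\equiv0$ on every edge of every decoration $G_w$. Kirchhoff's node law survives the extension: at a vertex of $T_i$ the incident decoration edges carry no flow, so the balance is exactly that of $\theta$, while at a vertex interior to some $G_w$ all incident edges carry zero flow. Thus $\tilde\theta$ is a unit flow from $v_i$ to infinity with $\mathcal E(\tilde\theta)=\mathcal E(\theta)<\infty$, and $\tilde T_i$ is transient. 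Prepending the edge $vv_1$ to the flow on $\tilde T_1$ exhibits a finite-energy unit flow from $v$ to infinity in $H$, so $H$ itself is transient and almost every trajectory visits $v$ only finitely often.

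Granting this, the trap structure is immediate. Because escaping $\tilde T_i$ forces a visit to $v$ and such visits occur finitely often, for each $i$ the trajectory is ultimately confined to one side of $\tilde T_i$, so $\lim_n\mathbbm{1}(Z_n\in\tilde T_i)$ exists almost surely and $\tilde T_i$ is a trap. To see the trap is nontrivial I would copy the computation of Theorem~\ref{transtriv} with $T_i$ replaced by $\tilde T_i$ and all degrees read off in $H$: conditioning on $Z_1=v_i$, on the following step entering $\tilde T_i$ rather than returning to $v$, and on the walk never thereafter returning to $v_i$, one gets that staying in $\tilde T_i$ forever has probability at least $\tfrac1{\deg v}\cdot\tfrac{\deg v_i-1}{\deg v_i}\,\PP(A')$, where $A'$ is the non-return event for simple random walk on $\tilde T_i$ started at $v_i$. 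Transience of $\tilde T_i$ makes $\PP(A')>0$, and a walk that never returns to $v_i$ can never reach $v$, hence stays in $\tilde T_i$.

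Finally I would assemble the pieces exactly as in Theorem~\ref{transtriv}. The $n\ge2$ traps $\tilde T_1,\dots,\tilde T_n$ are pairwise disjoint and each is entered and retained with positive probability, so $0<\PP(\text{eventually in }\tilde T_1)\le 1-\PP(\text{eventually in }\tilde T_2)<1$; hence $\mathbbm{1}(Z_n\in\tilde T_1)$ has a nonconstant almost sure limit and the boundary is nontrivial. The only delicate point is the transience of $\tilde T_i$, for which the zero-extension of the flow above is the clean device; one need only keep track of the disjointness hypothesis, which is exactly what guarantees that attaching the $G_w$ creates no new edge between distinct subtrees.
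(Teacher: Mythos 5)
Your proof is correct, but it takes a genuinely different route from the paper. The paper disposes of this proposition in two lines: the boundary of $T$ is nontrivial, so there is a non-constant bounded harmonic function $h$ on $T$, and one extends it to $H$ by declaring $\hat h\equiv h(v)$ on each decoration $G_v$. Since each $G_v$ meets $T$ only at $v$, harmonicity survives: at an interior vertex of $G_v$ all neighbours carry the common value $h(v)$, while at $v$ the decoration neighbours each contribute $h(v)$, so the mean-value identity in $H$ reduces to the one already holding in $T$. You instead rebuild the entire trap structure of Theorem~\ref{transtriv} inside $H$, which obliges you to prove the extra lemma that attaching the $G_w$ preserves transience; your zero-extension of the finite-energy unit flow is exactly the right device for that (it is the flow-theoretic face of the capacity criterion behind Theorem~\ref{exp_geod}), and the cut-vertex role of $v_i$ makes the rest of the argument go through verbatim. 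The trade-off: the paper's route is shorter and uses nothing beyond the already-established nontriviality of the boundary of $T$, whereas yours is self-contained at the level of transience of the $T_i$, exhibits explicit nontrivial traps $\tilde T_i$ in $H$ itself, and shows along the way that $H$ and the decorated subtrees are transient --- information the harmonic-extension argument never produces. One bookkeeping remark: your lower bound $\tfrac1{\deg v}\cdot\tfrac{\deg v_i-1}{\deg v_i}\,\PP(A')$ only accounts for trajectories that never revisit $v_i$ at all, not all trajectories that remain in $\tilde T_i$; that is harmless, since positivity is all you need.
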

\begin{proof}
Bounary of $T$ is nontrivial, so we have non-constant bounded harmonic function $h$ on $T$. We can extend it to the whole $H$ by setting
$$\hat{h}(x) =\begin{cases}
h(x),& \text{if } x \in T\\
h(v),& \text{if } x \in G_v
\end{cases}
$$
This way we get non-constant bounded harmonic function $\hat{h}$ on $H$, so boundary is non-trivial.
\end{proof}

Recall that Richard Thompson's group $F$ is defined as the group of all continuous piecewise linear transformations of $[0,1]$, whose points of non-differentiability belong to the set of dyadic numbers and derivative, where it exists, is an integer power of 2. It is known to be 2-generated.
Now we are ready to prove the main theorem.
\begin{figure}[ht]
\begin{center}
\hspace{1cm}\epsfig{file=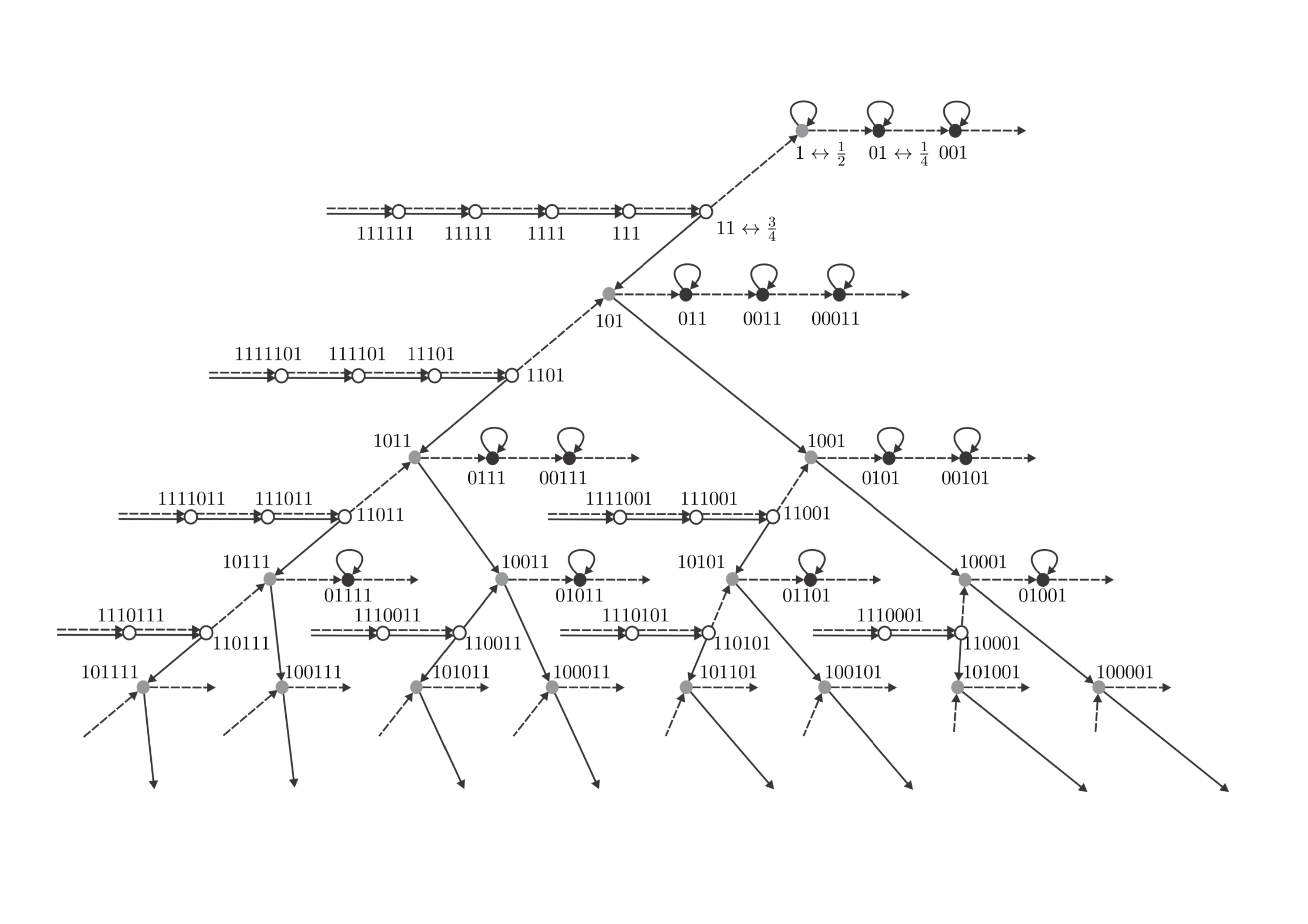,width=440pt}
\end{center}
\caption{Schreier graph $\mathcal{H}$ of the action of $F$ on the orbit of $1/2$\label{fig_schreier_dyadic}}
\end{figure}
\begin{theorem}
Thompson's group $F$ does not have Liouville property.
\end{theorem}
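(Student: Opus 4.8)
The plan is to realize the Schreier graph $\mathcal{H}$ of $F \curvearrowright \DD$ (Figure \ref{fig_schreier_dyadic}) as a \emph{decorated tree} and then feed it into the machinery of Theorems \ref{exp_geod} and \ref{transtriv} and Proposition \ref{nonliouv}. First I would make the structure of $\mathcal{H}$ explicit by computing how the generators $x_0,x_1$ and their inverses move a dyadic number $a/2^n\in(0,1)$. Since $F$ acts transitively on $\DD\cap(0,1)$, the orbit of $1/2$ is all of $\DD\cap(0,1)$, and by tracking binary expansions one reads off an embedded infinite binary tree $T\subseteq\mathcal{H}$ (a ``spine'' that branches as denominators grow) together with the remaining edges, which organize into finite graphs $G_v$ each meeting $T$ in a single vertex $v$. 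The goal of this step is to certify that $\mathcal{H}$ is exactly a graph of the form $H$ appearing in Proposition \ref{nonliouv}.

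Next I would verify transience of the branches. The root of $T$ has at least two descendants, and each subtree $T_i$ rooted at such a descendant has exponential geodesic growth of bounded multiplicity; concretely it looks like a regular binary tree, so the counting conditions (\ref{cond}) hold with $q=2$ exactly as in the Remark following Theorem \ref{exp_geod}. Hence $cap>0$ and $T_i$ is transient by Theorem (2.12) of \cite{woess}. Theorem \ref{transtriv} then yields that the simple random walk on $T$ has nontrivial boundary, and Proposition \ref{nonliouv} upgrades this to $\mathcal{H}$: there is a nonconstant bounded harmonic function $\hat h$ on $\mathcal{H}$.

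Finally I would transfer nontriviality from the Schreier graph to the group. Let $\mu$ be the uniform measure on the symmetric generating set $\{x_0^{\pm1},x_1^{\pm1}\}$; the simple random walk on $\mathcal{H}$ is the image of the $\mu$-random walk on $F$ under the orbit map $g\mapsto g\cdot\tfrac12$, so the pullback $\tilde h(g)=\hat h(g\cdot\tfrac12)$ is a bounded $\mu$-harmonic function on $F$, and it is nonconstant because $\hat h$ is nonconstant on the orbit. As $\mu$ is symmetric and finitely supported, the existence of such a $\tilde h$ shows the Poisson boundary of $(F,\mu)$ is nontrivial, i.e.\ $F$ fails the Liouville property.

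I expect the main obstacle to be the structural step: proving that $\mathcal{H}$ genuinely splits as a binary tree with single-vertex decorations. This demands careful bookkeeping of the generator action, in particular checking that the ``horizontal'' gadgets produced near points of the form $(2k+1)/2^n$ (where I already see multiple edges, e.g.\ two edges $3/4$–$7/8$) stay inside a single decoration $G_v$ and touch the spine only once, so that the harmonic-extension argument of Proposition \ref{nonliouv} applies verbatim. A secondary technical point is the passage between the (multigraph) simple random walk on $\mathcal{H}$ and the $\mu$-walk on $F$, together with the left/right-harmonicity bookkeeping in the pullback, but for symmetric $\mu$ this is routine.
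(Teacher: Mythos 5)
Your proposal follows essentially the same route as the paper: identify a transient tree inside Savchuk's Schreier graph by verifying the growth conditions of Theorem \ref{exp_geod}, apply Theorem \ref{transtriv} and Proposition \ref{nonliouv} to get a nonconstant bounded harmonic function on $\mathcal{H}$, and pull it back to $F$ along the orbit map. The only discrepancy is the structural guess you yourself flag as the main obstacle: the embedded tree (the grey vertices together with their adjacent white ones, rooted at $101$) is the Fibonacci tree rather than a regular binary tree, so conditions (\ref{cond}) hold with $q=\frac{1+\sqrt{5}}{2}$ rather than $q=2$ --- which changes nothing, since any $q>1$ suffices.
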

\begin{proof}
First of all, we observe that if action $G \curvearrowright X$ is non-Liouville, i.e. there are bounded non-constant harmonic functions on the Schreier graph of this action, then $G$ itself is non-Liouville. Required harmonic function on $G$ is just a pullback of a harmonic function $h$ on $X$: $h'(g) = h(g.x)$. Obviously, $h'$ is harmonic if $h$ is.
We consider the action of $F$ on the set of all dyadic numbers in $[0,1]$. We use presentation of its Schreier graph $\mathcal{H}$ constructed by D. Savchuk in \cite{savchuk}. It is illustrated on the Figure 1. We look at the tree $T$ rooted at 101 formed by grey vertices and white which are connected with the grey ones. We need to verify that $T$ satisfies condition \ref{cond}. Take $x_0$ to be the point $3/8 = 101$. For grey vertices $x$ we have $\texttt{gd}(n, x) = | \partial B(x_0, n)|$, and for white $y$ we have $\texttt{gd}(n, y) = |\partial B(x_0, n-1)|$. In fact, $|\partial B(x_0, n)|$ may be calculated explicitly ($T$ is a famous Fibonacci tree), and value $q = \frac{1+\sqrt{5}}{2}$ works. Hence, we can apply consequently \ref{exp_geod}, \ref{transtriv} and \ref{nonliouv} to see that there are non-constant bounded harmonic functions on $\mathcal{H}$, so, by the remark in the beginning of the proof, on the Thompson's group $F$.
\end{proof}
\begin{remark}
The fact that simple random walk on the Thompson's group $F$ has nontrivial boudary is first proven by Kaimanovich in \cite{kaimunpub}.
\end{remark}

\section{Growth function of $\mathcal{H}$}
In \cite{grignagn} different types of growth functions for groups are defined. We adapt these definitions to Schreier graphs of group actions. We'll compute growth function of $\mathcal{H}$.
Suppose we have a Schreier graph of action of a group $G$ on set $X$. Fix some starting point $p \in X$.
A cone type of a vertex $x$ is defined as follows:
\begin{flalign*}
    C(x) = \{ g \in G: \phantom{\hspace{12cm}}\\
\nonumber \text{if $w$ is a geodesic from $p$ to $x$, then $wg$ is a geodesic from $p$ to $g(p)$}\}. \phantom{\hspace{1cm}}\nonumber
\end{flalign*}
\textit{Complete geodesic growth function} is defined as
$$L(z) = \sum\limits_{g \in G: g \text{  is a geodesic for } g(p)}{gz^{|g|}}.$$
\textit{Geodesic growth function} is defined by sending all group elements to $1$, namely
$$l(z) = \sum\limits_{g \in G: g \text{  is a geodesic for } g(p)}{z^{|g|}}.$$
\textit{Orbit growth function} is defined as
$$\widehat{l}(z) = \sum\limits_{n=0}^{\infty}\#\{x \in X: \exists g \in G - geodesic: |g| = n, g(p)=x\}{z^n}.$$
Now consider $\mathcal{H}$. We are interested in geodesics starting at point $p = 1/2 (100...)$. Then we have 5 cone types of vertices:\\
$\cdot$Type 0: point 1 (which corresponds to 1/2);\\
$\cdot$Type 1: black vertices;\\
$\cdot$Type 2: grey vertices excluding 1;\\
$\cdot$Type 3: white vertices on the tree;\\
$\cdot$Type 4: white vertices not on the tree.\\
Let's write $\Lambda_{i}^{n} = \sum\limits_{g \in C_i, |g| = n}{g}$, where $C_i$ is the i-th
cone type.
Then one gets recurrent relations:
\begin{equation}\label{rec1}
\begin{split}
\Lambda^n_0 &= \Lambda^{n-1}_1a+\Lambda^{n-1}_3a^{-1}\\
\Lambda^n_1 &= \Lambda^{n-1}_1a\\
\Lambda^n_2 &= \Lambda^{n-1}_1a + \Lambda^{n-1}_2b + \Lambda^{n-1}_3a^{-1} \\
\Lambda^n_3 &= \Lambda^{n-1}_2b + \Lambda^{n-1}_4(a^{-1}+b^{-1})\\
\Lambda^n_4 &= \Lambda^{n-1}_4(
a^{-1}+b^{-1})
\end{split}
\end{equation}
Denote $L_i^n$ the number of geodesics of length $n$
starting from a vertex of type $i$, leading to different points, i.e. $L_i^n = \partial B(x_i, n)$ for $x_i$ being a vertex of type $i$. Then recurrent relations are:

\begin{equation}\label{rec2}
\begin{split}
L^n_0 &= L^{n-1}_1 + L^{n-1}_3\\
L^n_1 &= L^{n-1}_1\\
L^n_2 &= L^{n-1}_1 + L^{n-1}_2 + L^{n-1}_3 \\
L^n_3 &= L^{n-1}_2 + L^{n-1}_4\\
L^n_4 &= L^{n-1}_4
\end{split}
\end{equation}
 Let $\Lambda^n=(\Lambda^n_0, \Lambda^n_1, \Lambda^n_2,\Lambda^n_3, \Lambda^n_4)^T$ and
$L^n=(L^n_0, L^n_1, L^n_2,L^n_3, L^n_4)^T$.
We compute $\tilde{L}(z) = \sum_{n=0}^{\infty}\Lambda^nz^n$ - extended complete geodesic growth function and
$\widehat{L}(z) =\sum_{n=0}^{\infty}{L^nz^n}$ - geodesic orbit growth function
(if two geodesics lead to the same point, they are counted as one).
\begin{comment}$ \textbf{A} = \left[
\begin{array}{ccccc}
0 & a & 0 & a^{-1} & 0 \\
0 & a & 0 & 0 & 0 \\
0 & a & b & a^{-1} & 0 \\
0 & 0 & b & 0 & a^{-1}+b^{-1} \\
0 & 0 & 0 & 0 & a^{-1}+b^{-1}
\end{array}
\right]$\\

$ A = \left[
\begin{array}{ccccc}
0 & 1 & 0 & 1 & 0 \\
0 & 1 & 0 & 0 & 0 \\
0 & 1 & 1 & 1 & 0 \\
0 & 0 & 1 & 0 & 2 \\
0 & 0 & 0 & 0 & 2
\end{array}
\right]$,
$ B = \left[
\begin{array}{ccccc}
0 & 1 & 0 & 1 & 0 \\
0 & 1 & 0 & 0 & 0 \\
0 & 1 & 1 & 1 & 0 \\
0 & 0 & 1 & 0 & 1 \\
0 & 0 & 0 & 0 & 1
\end{array}
\right]$.\\
\\
\\
\end{comment}
By recurrent formulas, we have
 $$\tilde{L}(z) = \sum_{n=0}^{\infty}\textbf{A}^n\Lambda_0z^n = (I_5-\textbf{A}z)^{-1}\tilde\Lambda_0 \,\,\text{and}\,\,\widehat{L}(z) = (I_5-Bz)^{-1}L_0,$$
 where $\tilde\Lambda_0 = (e, e, e, e, e)^T,$ $L_0 = (1, 1, 1, 1, 1)^T$ and transition matrices $A,B, \textbf{A}$ are obtained from recurrent relations \ref{rec1}, \ref{rec2}.
 In particular, geodesic growth function is given by the first coordinate of vector-function
 $$\overline{l}(z) = (I_5-Az)^{-1}{\Lambda_0},$$ where ${\Lambda_0} = L_0 = (1, 1, 1, 1, 1)^T.$
 Performing calculations, one gets
 $$\overline{l}(z) = \left(\frac1{1 - 2 z}, \frac1{1 - z}, \frac1{1 - 3 z + 2 z^2}, \frac1{1 - 3 z +
 2 z^2}, \frac1{1 - 2 z}\right).$$ So, $\frac{1}{1-2z}$ is the geodesic growth function for our Schreier graph.
 Also we get $$\widehat{L}(z) = \left(\frac{1 + z}{1 - z - z^2}, \frac1{1 - z}, \frac{1 + z}{1 - 2 z + z^3}, \frac1{
 1 - 2 z + z^3}, \frac1{1 - z}\right).$$
Finally, we see that we've obtained $l(z) = \frac{1}{1-2z}$ and $ \tilde{l}(z) = \frac{1 + z}{1 - z - z^2}$.
$$\tilde{l}(z) = \frac{1 + z}{1 - z - z^2}
= \frac{\varphi^2}{\sqrt{5}(1-\varphi z)} -
\frac{\hat{\varphi}^2}{\sqrt{5}(1-\hat{\varphi} z)},$$
where $ \varphi=\frac{\sqrt{5}+1}{2}$,
$ \hat{\varphi}=\frac{-\hat{\sqrt{5}}+1}{2}$.
So, $L_n = |\partial B(p,n)| = \frac{\varphi^{n+2} - \hat{\varphi}^{n+2}}{\sqrt{5}}$.
$|B(p,n)| = \sum_{k \leq n}{L_k} =
\frac{\varphi^{n+3} - \varphi^2}{\sqrt{5}(\varphi-1)} -
\frac{\hat{\varphi}^{n+3} - \hat{\varphi}^2}{{\sqrt{5}(\hat{\varphi}}-1)}$.
\section*{Acknowledgements}
The author would like to thank his advisor Kate Jushchenko for her guidance and insightful discussions. The author would like to acknowledge Labex Milyon for funding the research.


\begin{thebibliography}{99}


\begin{comment}
\bibitem{kesten1}
\textsc{H. Kesten,}
{\it Symmetric random walks on groups},
Trans. Amer. Math. Soc. 92 (1959), 336-354.

\bibitem{kesten2}
\textsc{H. Kesten,}
{\it Full Banach Mean Values on Countable groups},
Math. Scand. 7 (1959), 146–156.

\bibitem{barthvirag}
\textsc{L. Bartholdi, B. Virag,}
{\it Amenability via random walks},
  Duke Math. J., 130 (2005), no.~1, 39--56.

\bibitem{kaimwoess}
\textsc{V. A. Kaimanovich, W. Woess,}
{\it Boundary and Entropy of Space Homogeneous Markov Chains},
Ann. Probab.
Volume 30, Number 1 (2002), 323-363.

\end{comment}




\bibitem{cfp}
Cannon, James W., William J. Floyd, and Walter R. Parry. "Introductory notes on Richard Thompson's groups." Enseignement Mathematique 42 (1996): 215-256.

\bibitem{Anna}
Erschler, Anna. "Poisson–Furstenberg boundaries, large-scale geometry and growth of groups." Proceedings of the International Congress of Mathematicians. Vol. 2. 2010.

\bibitem{IET}
Juschenko, Kate, et al. "Extensive amenability and an application to interval exchanges." arXiv preprint arXiv:1503.04977 (2015).


\bibitem{mattebon}
Bon, Nicolas Matte. "Subshifts with slow complexity and simple groups with the Liouville property." Geometric and Functional Analysis 24.5 (2014): 1637-1659.



\bibitem{woess}
Woess, Wolfgang. Random walks on infinite graphs and groups. Vol. 138. Cambridge university press, 2000.

\bibitem{kaimversh}
Kaimanovich, Vadim A., and Anatoly M. Vershik. "Random walks on discrete groups: boundary and entropy." The annals of probability (1983): 457-490.

\bibitem{mazur}
Kaimanovich, Vadim A., and Howard Masur. "The Poisson boundary of the mapping class group." Inventiones mathematicae 125.2 (1996): 221-264.


\bibitem{savchuk}
Savchuk, Dmytro. "Schreier graphs of actions of Thompson's group F on the unit interval and on the Cantor set." Geometriae Dedicata 175.1 (2014): 355-372.

\bibitem{kaimunpub}
Kaimanovich, Vadim A.
Boundary behavior of Thompson's group,
Unpublished manuscript.


\bibitem{grignagn}
Grigorchuk, Rostislav, and Tatiana Nagnibeda. "Complete growth functions of hyperbolic groups." Inventiones mathematicae 130.1 (1997): 159-188.
\end{thebibliography}
\end{document}